\newcommand{\arXiv}[1]{\href{http://arxiv.org/abs/#1}{arXiv:#1}}
\theoremstyle{plain}
\newtheorem{prop}{Proposition}[section]
\newtheorem{lem}[prop]{Lemma}
\theoremstyle{definition}
\newtheorem{rem}[prop]{Remark}
\newtheorem{exmp}[prop]{Example}
\newtheorem{defn}[prop]{Definition}
\newtheorem{notation}[prop]{Notation}
\DeclareMathOperator{\mult}{mult}
\DeclareMathOperator{\MAX}{max}
\newcommand{\bN}{\mathbb{N}}
\newcommand{\graph}{\mathcal{G}}
\newcommand{\abs}[1]{\left|#1\right|}
\newcommand{\ubar}[1]{\underline{#1}}
\newcommand{\psm}[1]{\left(\begin{smallmatrix}#1\end{smallmatrix}\right)}
\title{Generating stable modular graphs}
\author{Stefano Maggiolo}
\author{Nicola Pagani}
\date{\today}
\begin{document}

\begin{abstract}
  We present and prove the correctness of the program
  \texttt{boundary}, whose sources are available at
  \href{http://people.sissa.it/~maggiolo/boundary/}
  {\texttt{http://people.sissa.it/\~{}maggiolo/boundary/}}. Given two
  natural numbers $g$ and $n$ satisfying $2g+n-2>0$, the program
  generates all genus $g$ stable graphs with $n$ unordered marked
  points. Each such graph determines the topological type of a nodal
  stable curve of arithmetic genus $g$ with $n$ unordered marked
  points. Our motivation comes from the fact that the boundary of the
  moduli space of stable genus $g$, $n$-pointed curves can be
  stratified by taking loci of curves of a fixed topological type.
\end{abstract}

\maketitle
\setcounter{tocdepth}{1}
\tableofcontents

\section{Introduction}

Moduli spaces of smooth algebraic curves have been defined and then
compactified in algebraic geometry by Deligne and Mumford in their
seminal paper~\cite{deligne_mumford}. A conceptually important
extension of this notion in the case of pointed curves was introduced
by Knudsen~\cite{knudsen}.

The points in the boundary of the moduli spaces of pointed, nodal
curves with finite automorphism group. These curves are called
\emph{stable curves} (or pointed stable curves). The topology of one
such curve is encoded in a combinatorial object, called \emph{stable
  graph}.  The boundary of the moduli space admits a topological
stratification, made of loci whose points are curves with a fixed
topological type and a prescribed assignment of the marked points on
each irreducible component.

The combinatorics of the stable graphs have been investigated in
several papers in algebraic geometry, for many different purposes (see
for instance~\cite{getzler_kapranov, vanopstall_veliche_1,
  vanopstall_veliche_2, yang_3}). Our aim with this program is to
provide a useful and effective tool to generate all the stable graphs
of genus $g$ with $n$ unordered marked points up to isomorphism, for
low values of $g$ and $n$.

We construct an algorithm to generate all the stable graphs of genus
$g$ with $n$ unordered marked points. Our program uses then the software
\texttt{nauty}~\cite{mckay} to eliminate isomorphic graphs from the
list of graphs thus created. Since to check that two stable graphs are
isomorphic is computationally onerous, we try to generate a low number
of stable graphs, provided that we want at least one for every
isomorphism class. The algorithm generates recursively the vectors of
genera, number of marked points, number of loops, and the adjacency
matrix. While it fills these data, it checks the stability condition
and the condition on the total genus as early as possible, in order to
minimize the time spent on the branches of the recursion that do not
lead to stable graphs. Some analysis of the algorithm's performances
can be seen in Section~\ref{sec:performance}.

Programs for enumerative computations on
$\overline{\mathcal{M}}_{g,n}$ have been implemented in both Maple and
Macaulay2~(\cite{faber, yang_2, yang_1}). Our program can be used,
for example, to improve the results of~\cite[Section 5]{yang_3}, or
to prove combinatorial results on the moduli space of pointed stable
curves with low genus (cfr.~\cite{busonero_melo_stoppino}, for
example Corollary~5.3).

\section{Stable modular graphs}

From now on, we fix two natural numbers $G$ and $N$ such that $2
G-2+N>0$.  For every $K \in \bN^+$, we define $\ubar{K} = \{0, \dots,
K-1\}$ and $\Sigma_K$ to be the symmetric group on~the~set~$\ubar{K}$.

\begin{defn}
  \mbox{}
  \begin{itemize}
  \item An \emph{undirected multigraph\/} $\graph$ is a couple $(V,
    E)$ with $V$ a finite set of \emph{vertices\/} and $E$ a finite
    multiset of \emph{edges\/} with elements in $V \times V/\Sigma_2$.
  \item The multiplicity of the edge $(v, w)$ in $E$ is denoted by
    $\mult(v, w)$.
  \item The \emph{total multiplicity\/} of $\graph$, or its
    \emph{number of edges}, is $\abs{E}$: the cardinality of $E$ as a
    multiset.
  \item The \emph{degree\/} of a vertex $v$ is $\deg v \coloneqq 2
    \mult(v, v) + \sum_{w \neq v} \mult(v, w)$.
  \item A \emph{colored undirected multigraph\/} is a multigraph with
    some additional data attached to each vertex.
  \end{itemize}
\end{defn}

\begin{defn}\label{def:stable graph}
  A \emph{stable graph\/} of type $(G, N)$ is a colored undirected
  multigraph $\graph = (V, E)$, subject to the following conditions.
  \begin{enumerate}
  \item The color of a vertex $v$ is given by a pair of natural
    numbers $(g_v, n_v)$. The two numbers are called respectively the
    \emph{genus} and the \emph{number of marked points} of the vertex
    $v$.
  \item\label{it:condition connected} $\graph$ is connected.
  \item\label{it:condition genus} Its \emph{total genus}, defined as
    $\sum_{v \in V} g_v + \abs{E} - (\abs{V} - 1)$, equals $G$.
  \item Its \emph{total number of marked points}, defined as $\sum_{v
      \in V} n_v$, equals $N$.
  \item\label{it:condition stability} Stability condition: $\deg v +
    n_v \geq 3$ for every vertex $v$ with $g_v = 0$.
  \end{enumerate}
\end{defn}

\begin{notation}
  The number $\deg v + n_v$ is often called the \emph{number of half
    edges\/} associated to the vertex $v$. Condition \ref{it:condition
    stability} can be rephrased in: for every vertex $v$ of genus $0$,
  its number of half edges is at least $3$.
\end{notation}

Two stable graphs $\graph = (V, E, g, n)$ and $\graph^\prime =
(V^\prime, E^\prime, g^\prime, n^\prime)$ are \emph{isomorphic\/} if
there is a bijection $f\colon V \to V^\prime$ such that:
\begin{itemize}
\item $\mult(v, w) = \mult(f(v), f(w))$ for every $v, w \in V$;
\item $g_v = g^\prime_{f(v)}$ and $n_v = n^\prime_{f(v)}$ for every $v
  \in V$.
\end{itemize}
Our task is to generate one stable graph for each isomorphism class.

\begin{rem}
  Note that from the definition just given, we are working with an
  unordered set of marked points. The output of the program are the
  boundary strata of the moduli space of stable, genus $g$ curves with
  $n$ unordered points $\overline{\mathcal{M}}_{g,n}/ \Sigma_n$.
\end{rem}

\section{Description of the algorithm}\label{sec:description}

In this section we describe the general ideas of our algorithm. Let us
first introduce the notation we use in the program.

\begin{notation}\label{not:gnla}
  The set of vertices $V$ will always be $\ubar{K}$, so that vertices
  will be identified with natural numbers $i, j, \dots$. The
  multiplicity of the edge between $i$ and $j$ will be denoted by
  $a_{i,j}$: the symmetric matrix $a$ is called the \emph{adjacency
    matrix} of the stable graph. For convenience, we will denote $l_j
  \coloneqq a_{j,j}$: it is the vector whose elements are the number
  of loops at the vertex $j$. For simplicity, we will consider $g_j$,
  $n_j$, $l_j$, $a_{i,j}$ to be defined also for $i$ or $j$ outside
  $\ubar{K}$, in which case their value is always assumed to be $0$.
\end{notation}

\begin{rem}
  In the following, we assume $\abs{V} > 1$ in order not to deal with
  degenerate cases. There are trivially $G+1$ stable graphs of type
  $(G, N)$ with one vertex. Indeed, if there is exactly one vertex,
  the choice of the genus uniquely determines the number of loops on
  it after Definition~\ref{def:stable graph}.
\end{rem}

The program uses recursive functions to generate the data that
constitute a stable graph. In order, it generates the numbers $g_j$,
then the numbers $n_j$, $l_j$ (the diagonal part of the matrix $a$),
and finally, row by row, a symmetric matrix representing $a$.

When all the data have been generated, it tests that all the
conditions of Definition~\ref{def:stable graph} hold, in particular
that the graph is actually connected and satisfies the stability
conditions. Then it uses the software \texttt{nauty}~\cite{mckay} to
check if this graph is isomorphic to a previously generated graph. If
this is not the case, it adds the graph to the list of graphs of genus
$G$ with $N$ marked points.

A priori, for each entry of $g$, $n$, $l$, and $a$ the program tries
to fill that position with all the integers. This is of course not
possible, indeed it is important to observe here that each datum is
bounded. From below, a trivial bound is $0$, that is, no datum can be
negative. Instead, a simple upper bound can be given for each entry of
$g$ by the number $G$, and for each entry of $n$ by the number
$N$. For $l$ and $a$, upper bounds are obtained from $G$ using the
condition on the total genus (Condition~\ref{def:stable graph}).

These bounds are coarse: Section \ref{sec:ranges} will be devoted to
proving sharper bounds, from above and from below. Also, we will make
these bounds dynamical: for instance assigning the value $g_0 > 0$
clearly lowers the bound for $g_j, j > 0$. The improvement of these
bounds is crucial for the performance of the algorithm. In any case,
once we know that there are bounds, we are sure that the recursion~terminates.

The algorithm follows this principle: we want to generate the smallest
possible number of couples of isomorphic stable graphs. To do so, we
generalize the idea that to generate a vector for every class of
vectors of length $K$ modulo permutations, the simplest way is to
generate vectors whose entries are increasing. The program fills the
data row by row in the matrix:
\begin{equation}\label{eq:big matrix}
  \begin{pmatrix}
    g_0 & g_1 & \cdots & g_{K-1}\\
    n_0 & n_1 & \cdots & n_{K-1}\\
    l_0 & l_1 & \cdots & l_{K-1}\\
    \hline
    \bullet & a_{0,1} & \cdots & a_{0,K-1}\\
    a_{1,0} & \bullet & \ddots & \vdots\\
    \vdots & \ddots & \bullet & a_{K-2,K-1}\\
    a_{K-1,0} & \cdots & a_{K-1,K-2} & \bullet
  \end{pmatrix}\text{,}
\end{equation}
and generates only matrices whose columns are ordered. Loosely
speaking, we mean that we are ordering the columns lexicographically,
but this requires a bit of care, for two reasons:
\begin{itemize}
\item the matrix $a$ needs to be symmetric; in the program we generate
  only the strictly upper triangular part;
\item the diagonal of $a$ need not be considered when deciding if a
  column is greater than or equal to the previous one.
\end{itemize}

Therefore, to be precise, we define a relation (\emph{order}) for
adjacent columns. Let us call~$c_{j-1}$ and $c_j$ two adjacent columns
of the matrix~\eqref{eq:big matrix}. They are said to be equivalent
if~$c_{j-1,i} = c_{j,i}$ for any $i \notin \{j-1+3, j+3\}$. If they
are not equivalent, denote with $i_0$ the minimum index such that $i_0
\notin \{ j-1+3, j+3\}$ and $c_{j-1,i_0} \neq c_{j,i_0}$. Then we
state the relation $c_{j-1} < c_j$ if and only if $c_{j-1,i_0} <
c_{j,i_0}$. We do not define the relation for non-adjacent columns.
We say that the data are ordered when the columns are weakly
increasing, that is if, for all $j$, either $c_{j-1}$ is equivalent to
$c_j$ or $c_{j-1} < c_j$.

To ensure that the columns are ordered (in the sense we explained
before), the program keeps track of \emph{divisions}. We start filling
the genus vector $g$ in a non decreasing way, and every time a value
$g_j$ strictly greater than $g_{j-1}$ is assigned, we put a division
before $j$. This means that, when assigning the value of $n_j$, we
allow the algorithm to start again from $0$ instead of $n_{j-1}$,
because the column $c_j$ is already bigger than the column $c_{j-1}$.

After completing $g$, we start filling the vector $n$ in such a way
that, within two divisions, it is non decreasing. Again we introduce a
division before $j$ every time we assign a value~$n_j$ strictly
greater than $n_{j-1}$. We follow this procedure also for the vector
$l$.

Finally, we start filling the rows of the matrix $a$. Here the
procedure is a bit different. Indeed even if for the purpose of
filling the matrix it is enough to deal only with the upper triangular
part, imposing the conditions that the columns are ordered involves
also the lower triangular part. A small computation gives that the
value of $a_{i,j}$ is assigned starting from:
\[
\begin{cases}
  0 & \text{if there are divisions before $i$ and $j$}\\
  a_{i,j-1} & \text{if there is a division before $i$ but not before
    $j$}\\
  a_{i-1,j} & \text{if there is a division before $j$ but not before
    $i$}\\
  \max\{a_{i,j-1}, a_{i-1,j}\} & \text{if there are no divisions
    before $i$ or $j$,}
\end{cases}
\]
and we put a division before $i$ if $a_{i,j} > a_{i-1,j}$ and a
division before $j$ if $a_{i,j} > a_{i,j-1}$.

We cannot conclude immediately that this procedure gives us all
possible data up to permutations as in the case of a single
vector. This is because the transformation that the whole matrix
undergoes when a permutation is applied is more complicated: for the
first three rows (the vectors $g$, $n$, $l$), it just permutes the columns,
but for the remaining rows, it permutes both rows and columns. Indeed,
to prove that the procedure of generating only ordered columns does
not miss any stable graph is the content of the following section.

\section{The program generates all graphs}\label{sec:proof}

We want to prove the following result.

\begin{prop}\label{prop:main}
  The algorithm described in the previous section generates at least
  one graph for every isomorphism class of stable graphs.
\end{prop}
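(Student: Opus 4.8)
The plan is to show that every isomorphism class of stable graphs contains at least one representative whose associated matrix \eqref{eq:big matrix} has weakly increasing columns in the sense defined above, since the algorithm (modulo the bounds established in Section~\ref{sec:ranges}, which only restrict the search without excluding valid data) exhausts all such matrices. So fix a stable graph $\graph$ of type $(G,N)$ with $\abs{V} = K > 1$; we must exhibit a bijection $V \to \ubar{K}$ making the resulting matrix ordered.

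First I would set up the correspondence between relabelings of $V$ and the group action: a bijection $V \to \ubar{K}$ is the same as choosing an ordering of the vertices, and two orderings differ by an element $\sigma \in \Sigma_K$; applying $\sigma$ permutes the first three rows $(g,n,l)$ as a vector permutation $c_j \mapsto c_{\sigma^{-1}(j)}$ on the top block, and simultaneously conjugates the adjacency matrix, $a_{i,j} \mapsto a_{\sigma^{-1}(i),\sigma^{-1}(j)}$. The key observation is that the ``division'' bookkeeping in Section~\ref{sec:description} is precisely engineered so that the ordered matrices are exactly the ones that are lexicographically minimal (with respect to the column order defined there, reading the blocks $g$, then $n$, then $l$, then the rows of $a$ in turn) within their $\Sigma_K$-orbit. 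So the statement to prove reduces to: every $\Sigma_K$-orbit of such matrices contains a minimal element, and that minimal element has ordered columns.

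The second step, and the one I expect to be routine, is existence of a minimum: the orbit is finite, and the proposed order on adjacent columns extends to a total preorder on whole matrices by comparing them entrywise in the prescribed reading order (first the $g$-row, then $n$, then $l$, then rows of $a$ from top to bottom, skipping diagonal entries), so a minimum exists; one just has to check the comparison is well-defined given that $a$ is symmetric and the diagonal is ignored in the column comparison but genuinely determined by $l$. The third step, which is the main obstacle, is to verify that a matrix that is minimal in its orbit actually has weakly increasing \emph{adjacent} columns in the precise technical sense defined before Proposition~\ref{prop:main} — i.e., that global lex-minimality forces the local ordered condition. The subtlety is exactly the one flagged at the end of Section~\ref{sec:description}: a transposition $(j-1\ j)$ acts on the top three rows just by swapping columns $j-1$ and $j$, but on $a$ it swaps both rows $j-1,j$ and columns $j-1,j$, so exchanging two ``out of order'' adjacent columns also disturbs the two corresponding rows of $a$ and the entry $a_{j-1,j}$. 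I would handle this by arguing that if some adjacent pair $c_{j-1}, c_j$ violates the order, pick the first offending block/row and the minimal bad index $i_0$; then the transposition $\tau = (j-1\ j)$ changes nothing in any row read before the one containing $i_0$ (those agree on the two columns, hence are fixed by $\tau$), changes nothing in the current row at indices below $i_0$ (again equal there), and strictly decreases the current row at position $i_0$ — here one must check $i_0 \notin \{j-1, j\}$ in the relevant $a$-row case so that the swapped-row effect does not interfere, which is built into the exclusion $i \notin \{j-1+3, j+3\}$ in the definition of the order. Hence $\tau \cdot (\text{matrix}) < \text{matrix}$ in the global order, contradicting minimality; therefore the minimal matrix is ordered, completing the proof.

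Finally I would remark that the bounds and dynamic ranges of Section~\ref{sec:ranges}, together with the stability and total-genus checks performed at the leaves of the recursion, never discard a genuinely ordered stable matrix — they only prune branches that cannot complete to one — so the ordered representative we produced is indeed among the graphs the algorithm outputs (before \texttt{nauty} deduplication), which is exactly the claim of Proposition~\ref{prop:main}.
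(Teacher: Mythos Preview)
Your approach is essentially the paper's: introduce a total lexicographic order on matrices via the flattened vector $v(G)$, and show that any matrix with a breaking position becomes strictly smaller under the adjacent transposition $\sigma_{j-1,j}$ (the paper isolates this as Lemma~\ref{lemma:char} and then descends iteratively, whereas you pass directly to a global minimum of the finite orbit---logically equivalent). One correction: your claim that the ordered matrices are \emph{exactly} the lex-minimal ones in their $\Sigma_K$-orbit is false---a single orbit can contain several ordered matrices, which is precisely why \texttt{nauty} is needed afterward to remove duplicates---but your argument only uses (and only needs) the direction ``global minimum $\Rightarrow$ ordered'', so the proof goes through.
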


From now on, besides $G$ and $N$, we also fix the number of vertices
$K$, and focus on proving that the algorithm generates at least one
graph for every isomorphism class of stable graphs with $K$ vertices.

\begin{notation}
  We have decided previously to encode the data of a stable graph in a
  $(K+3 \times K)$ matrix $G \coloneqq (g, n, l, a)$
  (cfr.~\eqref{eq:big matrix}). We denote by $\mathcal{A}$ the set of
  all such matrices, and by $\mathcal{M}$ the set of all $(K+3 \times
  K)$ matrices that are generated by the algorithm described in the
  previous section.
\end{notation}

We can assume that the graphs generated by the algorithm are stable,
since we explicitly check connectedness and stability. In other words,
we can assume the inclusion $\mathcal{M} \subset \mathcal{A}$. Hence,
in order to prove Proposition~\ref{prop:main}, we will show that every
$G \in \mathcal{A}$ is in $\mathcal{M}$ up to applying a permutation
of $\ubar{K}$. The idea is to give a characterization
(Lemma~\ref{lemma:char}) of the property of being an element of
$\mathcal{M}$.

Recall first that the algorithm generates only matrices whose columns
are ordered, as described in Section~\ref{sec:description}. More
explicitly, if $G = (g, n, l, a) \in \mathcal{A}$, then $G \in
\mathcal{M}$ if and only if:
\begin{multline*}
  \forall (i,j)\colon
  i \not\in \{j-1, j\},\\
  \begin{aligned}
    g_{j-1} &> g_j &&\text{does not happen,}\\
    n_{j-1} &> n_j &\Rightarrow\  & g_{j-1} < g_j\,\text{,}\\
    l_{j-1} &> l_j &\Rightarrow\  & g_{j-1} < g_j \vee n_{j-1} < n_j\,\text{, and}\\
    a_{i,j-1} &> a_{i,j} &\Rightarrow\ & g_{j-1} < g_j \vee n_{j-1} < n_j \vee l_{j-1} < l_j \vee\\
    &&&\ \exists i^\prime < i: i^\prime \not\in \{j-1,j\} \wedge a_{i^\prime,j-1} < a_{i^\prime,j}\,\text{.}
  \end{aligned}
\end{multline*}

Let us call a piece of data $g_j$, $n_j$, $l_j$, or $a_{i,j}$ a
\emph{breaking position\/} if it does not satisfy the condition
above. Observe that a matrix $G \in \mathcal{A}$ has a breaking
position if and only if $G$ is not an element of $\mathcal{M}$.

We now introduce a total order on the set $\mathcal{A}$ of matrices $G
= (g,n,l,a)$. If $G$ is such a matrix, let $v(G)$ be the vector
obtained by juxtaposing the vectors $g$, $n$, $l$ and the rows of the
upper triangular part of $a$. For example, if
\[
  G = \begin{pmatrix}
    0 & 0 & 2 & 0\\
    1 & 1 & 0 & 1\\
    0 & 0 & 0 & 0\\
    \hline
    \bullet & 1 & 1 & 1\\
    1 & \bullet & 2 & 1\\
    1 & 2 & \bullet & 0\\
    1 & 1 & 0 & \bullet
  \end{pmatrix}
\]
(with the same structure as~\eqref{eq:big matrix}), then we define
\[
v(G) \coloneqq (0, 0, 2, 0,\quad 1, 1, 0, 1,\quad 0, 0, 0, 0,\quad 1, 1,
1,\quad 2, 1,\quad 0)\,\text{.}
\]

\begin{defn}\label{def:order}
  If $G, H \in \mathcal{A}$, we write $G \prec H$ if and only if
  $v(G)$ is smaller than $v(H)$ in the lexicographic order.  In this
  case we say that the matrix $G$ is smaller than the matrix $H$.
\end{defn}

Note that this total order on the set of matrices must not be confused
with the partial order described in
Section~\ref{sec:description}. From now on we will always refer to the
latter order on $\mathcal{A}$.

\begin{rem}
  If $\sigma \in \Sigma_K$ is a permutation and $G = (g, n, l, a)$ is
  a graph, then we can apply $\sigma$ to the entries of the data of
  $G$, obtaining an isomorphic graph. The action of $\sigma$ on $G$
  is: $(g,n,l,a)\to (g^\prime,n^\prime,l^\prime,a^\prime)$ where
  $g^\prime_j=g_{\sigma(j)}$, $n^\prime_j=n_{\sigma(j)}$,
  $l^\prime_j=l_{\sigma(j)}$ and $a^\prime_{i,j} =
  a_{\sigma(i),\sigma(j)}$. We denote this new matrix by $\sigma
  G$. We write $\sigma_{i,j}$ for the element of $\Sigma_K$ that
  corresponds to the transposition of $i, j \in \ubar{K}$.
\end{rem}

Now we are able to state the characterization we need to prove
Proposition~\ref{prop:main}.

\begin{lem}\label{lemma:char}
  Let $G \in \mathcal{A}$; then $G \in \mathcal{M}$ if and only if $G$
  is minimal in the set
  \[
  \bigl\{ \sigma_{j-1,j} G \,\mid\, 0<j<K \bigr\}\text{.}
  \]
  with respect to the order given in Definition~\ref{def:order}.
\end{lem}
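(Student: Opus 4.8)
The plan is to prove the two directions separately, keeping in mind that $G \in \mathcal{M}$ is equivalent (by the discussion preceding the lemma) to $G$ having no breaking position, and that the set $\{\sigma_{j-1,j}G \mid 0 < j < K\}$ is just the set of matrices obtained from $G$ by swapping a single pair of adjacent columns (and the corresponding rows of $a$). So the statement to prove is: \emph{$G$ has no breaking position if and only if swapping any adjacent pair $(j-1,j)$ never produces a $\prec$-smaller matrix.}

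For the forward direction ($G \in \mathcal{M} \Rightarrow G$ minimal), I would argue contrapositively: suppose some $\sigma_{j-1,j}G \prec G$. I want to locate a breaking position in column $j$. Compare $v(G)$ and $v(\sigma_{j-1,j}G)$ entry by entry in lexicographic order. The transposition $\sigma_{j-1,j}$ fixes every entry of $g,n,l,a$ except: in each of the rows $g,n,l$ it swaps the $(j-1)$-th and $j$-th entries; and in $a$ it swaps column $j-1$ with column $j$ and row $j-1$ with row $j$. One must check that the entries of $a$ on the diagonal and at positions indexed by $\{j-1,j\}$ do not matter — but those are exactly the positions excluded in the definitions of \emph{equivalent columns}, of \emph{breaking position}, and they also wash out of $v$ because $v$ only records the strictly upper triangular part and $a$ is symmetric; this is the bookkeeping point that needs care. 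Granting that, the first coordinate at which $v(\sigma_{j-1,j}G)$ is strictly smaller than $v(G)$ is a position of the form $g_{j-1}$, $n_{j-1}$, $l_{j-1}$, or $a_{i,j-1}$ for some $i \notin\{j-1,j\}$, with all earlier comparable entries of columns $j-1$ and $j$ equal; reading off what "all earlier entries equal and this one larger in $G$" says is precisely the statement that this position is a breaking position. Hence $G \notin \mathcal{M}$.

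For the reverse direction ($G$ minimal $\Rightarrow G \in \mathcal{M}$), again contrapositive: suppose $G$ has a breaking position, and let it be the $\prec$-earliest breaking position in $v(G)$, lying in some column $j$ (i.e.\ it is $g_j$, $n_j$, $l_j$, or $a_{i,j}$). I claim $\sigma_{j-1,j}G \prec G$. By minimality of the chosen breaking position, columns $j-1$ and $j$ agree in all comparable entries strictly above the breaking position; at the breaking position itself the column-$(j-1)$ entry strictly exceeds the column-$j$ entry, while the breaking-position condition failing tells us that \emph{none} of the earlier "escape clauses" ($g_{j-1}<g_j$, etc.) held, which is exactly the statement that those earlier comparable entries are equal, not just $\le$. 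Therefore swapping columns $j-1$ and $j$ strictly decreases $v$ at this position and leaves everything before it unchanged, so $\sigma_{j-1,j}G \prec G$, contradicting minimality.

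The main obstacle I expect is the bookkeeping in translating between the three representations of "column $j$ versus column $j-1$": the partial order on columns from Section~\ref{sec:description} (which ignores the diagonal and the mutual entries $a_{j-1,j}$, $a_{j,j-1}$), the explicit list of breaking-position inequalities, and the effect of $\sigma_{j-1,j}$ on the juxtaposed vector $v(G)$. One must verify that these three notions of "the relevant coordinates of columns $j-1$ and $j$" coincide — in particular that the excluded indices $\{j-1+3, j+3\}$ in the column order, the excluded $i \in \{j-1,j\}$ in the breaking-position definition, and the entries of $a$ that either cancel in $v(\sigma_{j-1,j}G)-v(G)$ or lie below the diagonal and hence are absent from $v$, all match up. Once this dictionary is set up cleanly, both directions are a direct unwinding of definitions; I would state the dictionary as a short preliminary observation before splitting into the two implications.
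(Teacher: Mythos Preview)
Your approach is correct and is essentially the paper's own argument: both directions are proved by contraposition, translating between the existence of a breaking position and the existence of an adjacent transposition $\sigma_{j-1,j}$ that strictly decreases $v$. Two minor points where the paper is slightly tighter: it picks the first breaking position \emph{of its column} rather than a globally $\prec$-earliest one (which spares you from explaining how lower-triangular breaking positions are ordered in $v$), and it carries out explicitly the case split $i<j$ versus $i>j$ that you defer as ``bookkeeping'' --- in the latter case the first affected entry of $v$ lies in row $j-1$ of the upper triangle and one must invoke the symmetry $a_{i,j}=a_{j,i}$; also note that the first differing coordinate of $v$ is at index $j-1$ (e.g.\ $a_{i,j-1}$), whereas the breaking position in the paper's convention is the entry $a_{i,j}$ in column $j$.
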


\begin{proof}
  We will prove that $G$ is not minimal if and only if there is a
  breaking position.

  Assume there is at least one breaking position in $G$. If there is
  one in $g$, $n$, or $l$, it is trivial to see that transposing the
  corresponding index with the previous one gives a smaller matrix. If
  this is not the case, let $a_{i,j}$ be a breaking position such that
  $a_{i^\prime,j}$ is not a breaking position whenever $i^\prime < i$
  (the position $(i,j)$ is the first breaking position of its
  column). We deduce that $g_{j-1} = g_j$, $n_{j-1} = n_j$, $l_{j-1} =
  l_j$, and that for all $i^\prime < i$ not in $\{j-1,j\}$, we have
  $a_{i^\prime, j-1} = a_{i^\prime, j}$. Let $H \coloneqq
  \sigma_{j-1,j} G$; the vectors $g$, $n$, and $l$ (the first three
  rows) coincide in $G$ and $H$.
  \begin{itemize}
  \item If $j > i$, the smallest breaking position is in the upper
    triangular part of $a$; it is then clear that $H \prec G$.
  \item If $j < i$, the smallest breaking position is in the lower
    triangular part; by using the symmetry of the matrix $a$ we again
    obtain $H \prec G$ (see the right part of Figure~\ref{fig:a}).
  \end{itemize}

  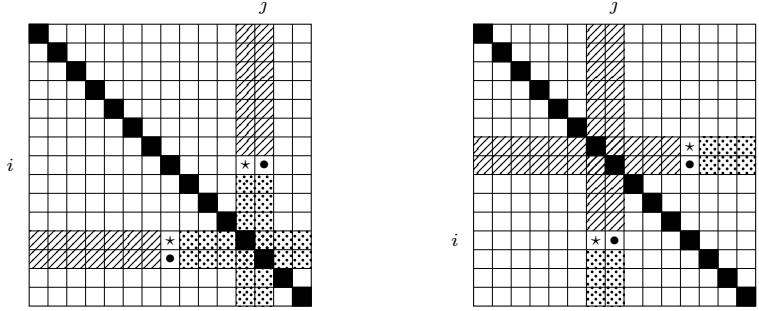
\begin{figure}[t]
    \centering
    \begin{tikzpicture}[xscale=0.25,yscale=-0.25]
      \node at (-1,7.5) {$\scriptstyle{i}$};
      \node at (12.5,-1) {$\scriptstyle{j}$};
      \node at (11.5,7.5) {$\scriptstyle{\star}$};
      \node at (12.5,7.5) {$\scriptstyle{\bullet}$};
      \node at (7.5,11.5) {$\scriptstyle{\star}$};
      \node at (7.5,12.5) {$\scriptstyle{\bullet}$};
      \fill [pattern=crosshatch dots] (8,11) -- (8,13) -- (15,13) -- (15,11) -- cycle;
      \fill [pattern=crosshatch dots] (11,8) -- (13,8) -- (13,15) -- (11,15) -- cycle;
      \fill [pattern=north east lines] (0,11) -- (0,13) -- (7,13) -- (7,11) -- cycle;
      \fill [pattern=north east lines] (11,0) -- (13,0) -- (13,7) -- (11,7) -- cycle;

      \draw (0,0) -- (15,0) -- (15,15) -- (0,15) -- cycle;
      \foreach \x in {1,2,...,14}
      {
        \draw [very thin] (0,\x) -- (15,\x);
        \draw [very thin] (\x,0) -- (\x,15);
      }
      \foreach \x in {0,1,...,14}
      \filldraw (\x,\x) -- (\x,\x+1) -- (\x+1,\x+1) -- (\x+1,\x) -- cycle;
    \end{tikzpicture}
    \hspace{1.5cm}
    \begin{tikzpicture}[xscale=0.25,yscale=-0.25]
      \node at (-1,11.5) {$\scriptstyle{i}$};
      \node at (7.5,-1) {$\scriptstyle{j}$};
      \node at (6.5,11.5) {$\scriptstyle{\star}$};
      \node at (7.5,11.5) {$\scriptstyle{\bullet}$};
      \node at (11.5,6.5) {$\scriptstyle{\star}$};
      \node at (11.5,7.5) {$\scriptstyle{\bullet}$};
      \fill [pattern=crosshatch dots] (12,6) -- (12,8) -- (15,8) -- (15,6) -- cycle;
      \fill [pattern=crosshatch dots] (6,12) -- (8,12) -- (8,15) -- (6,15) -- cycle;
      \fill [pattern=north east lines] (0,6) -- (0,8) -- (11,8) -- (11,6) -- cycle;
      \fill [pattern=north east lines] (6,0) -- (8,0) -- (8,11) -- (6,11) -- cycle;

      \draw (0,0) -- (15,0) -- (15,15) -- (0,15) -- cycle;
      \foreach \x in {1,2,...,14}
      {
        \draw [very thin] (0,\x) -- (15,\x);
        \draw [very thin] (\x,0) -- (\x,15);
      }
      \foreach \x in {0,1,...,14}
      \filldraw (\x,\x) -- (\x,\x+1) -- (\x+1,\x+1) -- (\x+1,\x) -- cycle;
    \end{tikzpicture}\vspace{-.5\baselineskip}
    \caption{The matrix $a$ when the first breaking position (the
      bullet) is $a_{i,j}$ with $j > i$ (left) or $j < i$
      (right). When transposing $j-1$ and $j$, the white and the
      diagonal-filled entries do not change.}\label{fig:a}
  \end{figure}

  Conversely, let $j$ be such that $H \coloneqq \sigma_{j-1,j} G \prec
  G$. Then consider the first entry (reading from left to right) of
  the vector $v(G)$ that is strictly bigger than $v(H)$. This is a
  breaking position. Notice that if it occurs in the matrix $a$
  (equivalently, in the last $K$ rows), it is actually the first
  breaking position of its column.
\end{proof}

The proof of Proposition~\ref{prop:main} follows arguing as in this
example.

\begin{exmp}
  Let $G_0 \coloneqq G \in \mathcal{A}$ be the graph of the previous
  example:
  \[
  G_0 = \psm{
    0 & 0 & 2 & 0\\
    1 & 1 & 0 & 1\\
    0 & 0 & 0 & 0\\[2pt]
    \hline\\[1pt]
    \bullet & 1 & 1 & 1\\
    1 &\bullet & 2 & 1\\
    1 & 2 & \bullet & 0\\
    1 & 1 & 0 & \bullet
  }\text{.}
  \]
  This graph is stable but not in $\mathcal{M}$ because, for example,
  $g_2 > g_3$ implies that $g_3$ is a breaking position. Thus we apply
  the permutation $\sigma_{2,3}$, obtaining the graph
  \[
  G_1 \coloneqq \sigma_{2,3} G_0 = \psm{
    0 & 0 & 0 & 2\\
    1 & 1 & 1 & 0\\
    0 & 0 & 0 & 0\\[2pt]
    \hline\\[1pt]
    \bullet & 1 & 1 & 1\\
    1 & \bullet & 1 & 2\\
    1 & 1 & \bullet & 0\\
    1 & 2 & 0 & \bullet
  } \prec G_0\text{.}
  \]
  Now $a_{3,2}$ is a breaking position; applying $\sigma_{1,2}$, we
  obtain
  \[
  G_2 \coloneqq \sigma_{1,2} G_1 = \psm{
    0 & 0 & 0 & 2\\
    1 & 1 & 1 & 0\\
    0 & 0 & 0 & 0\\[2pt]
    \hline\\[1pt]
    \bullet & 1 & 1 & 1\\
    1 & \bullet & 1 & 0\\
    1 & 1 & \bullet & 2\\
    1 & 0 & 2 & \bullet
  } \prec G_1\text{.}
  \]
  This introduces a new breaking position at $a_{3,1}$, so we apply
  the transposition $\sigma_{0,1}$:
  \[
  G_3 \coloneqq \sigma_{0,1} G_2 = \psm{
    0 & 0 & 0 & 2\\
    1 & 1 & 1 & 0\\
    0 & 0 & 0 & 0\\[2pt]
    \hline\\[1pt]
    \bullet & 1 & 1 & 0\\
    1 & \bullet & 1 & 1\\
    1 & 1 & \bullet & 2\\
    0 & 1 & 2 & \bullet
  } \prec G_2\text{.}
  \]
  The graph $G_3$ is finally in $\mathcal{M}$ and indeed no
  transposition can make it smaller.
\end{exmp}

\begin{proof}[Proof of Proposition~\ref{prop:main}]
  Recall that we have to prove that for every $G \in \mathcal{A}$,
  there is a permutation $\sigma \in \Sigma_K$ such that $\sigma G \in
  \mathcal{M}$.

  So, let $G_0 = G \in \mathcal{A}$. If $G \in \mathcal{M}$, then we
  are done; otherwise, $G$ does not satisfy the condition of
  Lemma~\ref{lemma:char}, hence there is a transposition $\sigma_{j-1,
    j}$ such that $G_1 = \sigma_{j-1,j} G_0 \prec G_0$.

  The iteration of this process comes to an end (that is, we arrive to
  a matrix in $\mathcal{M}$) since the set
  \[
  \bigl\{ \sigma G \mid \sigma \in \Sigma_K\bigr\}
  \]
  is finite.
\end{proof}

\section{Description of the ranges}\label{sec:ranges}

In Section~\ref{sec:description} we have introduced the algorithm, by
describing the divisions. In this section we introduce accurate ranges
for the possible values of $g$, $n$, $l$ and $a$.

We will deduce from the conditions of Definition~\ref{def:stable
  graph} some other necessary conditions that can be checked before
the graph is defined in its entirety. More precisely, every single
datum is assigned trying all the possibilities within a range that
depends upon the values of $G$ and $N$, and upon the values of the
data that have already been filled. The conditions we describe in the
following are not the only ones possible; we tried other possibilities,
but heuristically the others we tried did not give any improvement.

The order in which we assign the value of the data is $g$, $n$, $l$, and
finally the upper triangular part of $a$ row after row.

\begin{notation}
  Suppose we are assigning the $i$-th value of one of the vectors
  $g$, $n$ or $l$, or the $(i,j)$-th value of $a$. We define the
  following derived variables $e^{\MAX}$, $c$ and $p_1$ that depend
  upon the values that have already been assigned to $g$, $n$, $l$, $a$.

  We let $e^{\MAX}$ be the maximum number of edges that could be
  introduced in the subsequent iterations of the recursion, and $c$ be
  the number of couples of (different) vertices already connected by
  an edge. We let $p_1$ be the number of vertices $z$ to which the
  algorithm has assigned $g_z = 0$.  Note that the final value of
  $p_1$ is determined when the first genus greater than $0$ is
  assigned, in particular the final value of $p_1$ is determined at
  the end of the assignment of the values to the vector $g$.  On the
  other hand, $c$ starts to change its value only when the matrix $a$
  begins to be filled.

  \emph{After} the assignment of the $i$-th value, the derived values
  $e^{\MAX}$, $c$ and $p_1$ are then updated according to the
  assignment itself.
\end{notation}

\begin{notation} \label{not:partial_assign}
  When deciding $g$, $n$, or $l$, we let $n^{(2)}_i$ be the minimum
  between $2$ and the number of half edges already assigned to the
  $i$-th vertex. This is justified by the fact that we know that, when
  we will fill the matrix $a$, we will increase by one the number of
  half edges at the vertex $i$ in order to connect it to the rest of
  the graph. Hence, whenever $g_i = 0$, $n^{(2)}_i$ is the number of
  \emph{stabilizing\/} half edges at the vertex $i$: one half edge is
  needed to connect the vertex to the rest of the graph, and then at
  least two more half edges are needed to stabilize the vertex. When
  deciding $a_{i,j}$, it is also useful to have defined $h_i$, the
  total number of half edges that hit the $i$-th vertex. Finally, we
  define
  \begin{align*}
    G_i &\coloneqq \sum_{i^\prime < i} g_{i^\prime}\,\text{,} &
    N_i &\coloneqq \sum_{\substack{i^\prime < i}} n_{i^\prime}\,\text{,}\\
    N^{(2)} &\coloneqq \sum_{g_{i^\prime} = 0} n^{(2)}_{i^\prime}\,\text{,} &
    N^{(2)}_i &\coloneqq \sum_{\substack{i^\prime < i\\g_{i^\prime} = 0}} n^{(2)}_{i^\prime}\,\text{;}\\
    L_i &\coloneqq \sum_{i^\prime < i} l_{i^\prime}\,\text{,} &
    A_{i,j} &\coloneqq \sum_{\mathclap{i^\prime < i \vee j^\prime < j}} a_{i^\prime, j^\prime}\,\text{.}
  \end{align*}
\end{notation}

We are now ready to describe the ranges in which the data can vary.
We study subsequently the cases of $g$, $n$, $l$ and $a$, thus
following the order of the recursions of our algorithm. Each range is
described by presenting a first list of general constraints on the
parameters and then by presenting a second list containing the actual
ranges in the last line.

\subsection{Range for $g_i$}

When the algorithm is deciding the value of $g_i$, we have the
following situation:
\begin{itemize}
\item $e^{\MAX} = G - G_i + K - 1$ by Condition~\ref{it:condition
    genus};
\item amongst the $e^{\MAX}$ edges, there are necessarily $K-1$
  non-loop edges (to connect the graph); these $K-1$ edges give one
  half edge for each vertex, whereas we can choose arbitrarily where
  to send the other $K-2$ half edges; conversely, the $2(e^{\MAX} - K
  +1)$ half edges of the remaining edges can be associated to any
  vertex; therefore, the maximum number of half edges (not counting
  those that are needed to connect the graph) is $2e^{\MAX} - K + N =
  2(G - G_i) + K - 2 + N$;
\item we need $2p_1$ half edges to stabilize the genus $0$ vertices,
  since one half edge comes for free from the connection of the graph.
\end{itemize}

We use the following conditions to limit the choices we have for
$g_i$:
\begin{enumerate}
\item since $g$ is the first vector to be generated, there is no
  division before $i$, hence
  \[
  g_i \geq g_{i-1}\text{;}
  \]
  remember that $g_j = 0$ whenever $j \not\in \ubar{K}$;
\item we need at least $K-1$ non-loop edges, hence (using the fact
  that $\sum_{j \geq i} g_j \geq (K-i) g_i$)
  \begin{align*}
    &e^{\MAX} \geq K-1\\
    &\quad\Rightarrow G - G_i - (K-i) g_i + K-1 \geq K-1\\
    &\quad\Rightarrow (K-i)g_i \leq G - G_i\,\text{;}
  \end{align*}
\item in order to stabilize the $p_1$ vertices of genus $0$ (using the
  fact that one stabilizing half edge comes for free by connection) we
  must have
  \begin{align*}
    &2 p_1 \leq 2e^{\MAX} - K + N\\
    &\quad\Rightarrow 2p_1 \leq G - G_i - (K-i)g_i - K + N\\
    &\quad\Rightarrow (K-i)g_i \leq G - G_i - K + N - 2p_1\,\text{.}
  \end{align*}
\end{enumerate}

\subsection{Range for $n_i$}

When deciding $n_i$, we have the following situation:
\begin{itemize}
\item as before, $e^{\MAX} = G - G_K + K - 1 \geq K-1$, and the
  maximum number of half edges still to be assigned is $2e^{\MAX} - K
  + N - N_i - n_i = 2(G - G_K) + K - 2 + N - N_i - n_i$;
\item we need $2p_1 - N^{(2)}_i - n^{(2)}_i$ half edges to stabilize
  the first $p_1$ vertices;
\item if $g_i = 0$, we need $2(i+1) - N^{(2)}_i - n^{(2)}_i$ more half
  edges to stabilize the first $i+1$ vertices.
\end{itemize}

The following conditions define then the ranges for the possible
choices for $n_i$:

\begin{enumerate}
\item if there is not a division before $i$ (that is, if $g_i =
  g_{i-1}$), then we require $n_i \geq n_{i-1}$; otherwise, just $n_i
  \geq 0$;
\item we cannot assign more than $N$ marked points, hence (where we
  treat the case of $g_i = 0$ in a special way)
  \begin{align*}
    &N_i + n_i \leq N\\
    &\quad\Rightarrow n_i \leq N - N_i\\
    &\quad\Rightarrow (p_1 - i)n_i \leq N - N_i\text{ if moreover $g_i = 0$.}
  \end{align*}
\item if $g_i = 0$, for the purpose of stabilizing the first $i+1$
  curves we cannot use marked points anymore, therefore we have
  \begin{align*}
    &2 (i+1) - N^{(2)}_i - n^{(2)}_i \leq \bigl(2(G - G_K) + K - 2\bigr)\\
    &\quad\Rightarrow n^{(2)}_i = \min(2, n_i) \geq - \bigl(2(G - G_K) + K - 2) + (2(i+1) - N^{(2)}_i\bigr)\\
    &\quad\Rightarrow
    \begin{cases}
      \text{impossible} & \text{if $\mathrm{RHS} > 2$}\\
      n_i \geq \mathrm{RHS} & \text{otherwise.}
    \end{cases}
  \end{align*}
\end{enumerate}

\subsection{Range for $l_i$}

\enlargethispage{1.1\baselineskip}
When deciding $l_{i}$, this is the situation:
\begin{itemize}
\item $e^{\MAX} = G - G_K - L_i - l_i + K - 1 \geq K-1$, and the
  maximum number of half edges still to assign is $2e^{\MAX} - K = 2(G
  - G_K - L_i - l_i) + K - 2$;
\end{itemize}

The conditions on $l_i$ are then the following:
\begin{enumerate}
\item if there is not a division before $i$, then we require $l_i \geq
  l_{i-1}$; otherwise, just $l_i \geq 0$;
\item we need at least $K-1$ non-loop edges, hence
  \begin{align*}
    &e^{\MAX} \geq K-1\\
    &\quad\Rightarrow G - G_K - L_i - l_i + K-1 \geq K-1\\
    &\quad\Rightarrow l_i \leq G - G_K - L_i\,\text{;}
  \end{align*}
\item let $z$ be the index of the genus $0$ vertex with the least
  number of stabilizing half edges such that $z < i$; it already has
  $n_z + 2l_z$ half edges, but we cannot use loops anymore to
  stabilize it; hence,
  \begin{align*}
    &\max(0, 2-n_z-2l_z) \leq G - G_K - L_i - l_i + K - 1\\
    &\quad\Rightarrow l_i \leq G - G_K - L_i + K - 3 + n_z + 2l_z
  \end{align*}
\item assume $g_i = 0$; if $l_i > 0$, we are adding to the $i$-th
  vertex $2-n^{(2)}_i$ stabilizing half edges, and to stabilize the
  $p_1$ genus $0$ vertices, we need to have
  \begin{align*}
    &2 p_1 - N^{(2)} - \bigl(2-n^{(2)}_i\bigr) \leq 2e^{\MAX} - K\\
    &\quad\Rightarrow 2p_1 - N^{(2)} - \bigl(2-n^{(2)}_i\bigr) \max(0, 2-m_i) \leq 2(G - G_K - L_i - l_i + K - 1) - K\\
    &\quad\Rightarrow 2l_i \leq 2(G - G_K - L_i) + K + N^{(2)} - n^{(2)}_i - 2p_i\,\text{.}
  \end{align*}
\item assume $g_i = 0$; after deciding $l_i$, we still have $e^{\MAX}$
  edges to place, and each of them can contribute with one half edge
  to the stabilization of the $i$-th vertex; moreover, one of these
  half edges is already counted for the stabilization; hence
  \begin{align*}
    &n_i + 2l_i + (e^{\MAX} - 1) \geq 2\\
    &\quad\Rightarrow n_i + 2l_i + G - G_K - L_i - l_i + K - 1 - 1 \geq 2\\
    &\quad\Rightarrow l_i \geq 4 - n_i - G + G_K + L_i - K\,\text{.}
  \end{align*}
\end{enumerate}

\subsection{Range for $a_{i,j}$}

When deciding $a_{i,j}$, this is the situation:
\begin{itemize}
\item earlier in Notation \ref{not:partial_assign}, we observed that
  for the purpose of filling the vectors $g$,$n$ and $l$ we could
  consider a genus $0$ vertex stabilized when it had at least two
  half-edges (since the graph is going to be connected
  eventually). When assigning the values of $a$, the stability
  condition goes back to its original meaning, i.e. each vertex has at
  least $3$ half edges.
\item $e^{\MAX} = G - G_K - L_K - A_{i,j} + K - 1$;
\item we have already placed edges between $c$ couples of different
  vertices;
\end{itemize}

Here are the constraints that $a_{i,j}$ must satisfy:
\begin{enumerate}
\item if there is not a division before $i$, then we require $a_{i,j}
  \geq a_{i-1,j}$; otherwise, just $a_{i,j} \geq 0$;
\item if there is not a division before $j$, then we require $a_{i,j}
  \geq a_{i,j-1}$;
\item we need at least $K-2-c$ (if positive) edges to connect the
  graph, because if $a_{i,j} > 0$, $c$ will increase by $1$ (this
  estimate could be very poor, but enforcing the connectedness
  condition in its entirety before completing the graph is too slow),
  hence:
  \begin{align*}
    &e^{\MAX} - a_{i,j} \geq \max(0, K-2-c)\\
    &\quad\Rightarrow a_{i,j} \leq G - G_K - L_K - A_{i,j} +K - 1 - \max(0, K-2-c)\,\text{;}
  \end{align*}
\item $a_{i,j}$ contributes with at most $\max(0, 3-h_i) + \max(0,
  3-h_j)$ stabilizing half edges; hence, to stabilize the $p_1$ genus
  $0$ vertices, we need
  \begin{align*}
    &3p_1 - \sum_{g_{i^\prime} = 0} \min(3, n_i + 2l_i) - \bigl(\max(0, 3-h_i) + \max(0, 3-h_j)\bigr) \leq 2 (e^{\MAX} - a_{i,j})\\
    &\quad\Rightarrow 3p_1 - \sum_{g_{i^\prime} = 0} \min(3, n_i + 2l_i) - \bigl(\max(0, 3-h_i) + \max(0, 3-h_j)\bigr) \leq \\
    &\quad\quad\quad \leq 2 (G - G_K - L_K - A_{i,j} + K - 1 - a_{i,j})\\
    &\quad\Rightarrow 2a_{i,j} \leq 2 (G - G_K - L_K - A_{i,j} + K - 1) - 3p_1 +\\
    &\quad\quad\quad +\sum_{g_{i^\prime} = 0} \min(3, n_i + 2l_i) + \max(0, 3-h_i) + \max(0, 3-h_j)\,\text{.}
  \end{align*}
\item if $j = K-1$ (that is, if this is the last chance to add half
  edges to the $i$-th vertex), then we add enough edges from $i$ to
  $K-1$ in order to stabilize the vertex $i$; moreover, if up to now
  we did not place any non-loop edge on the vertex $i$, we impose
  $a_{i,K-1} > 0$.
  \begin{align*}
    a_{i,K-1}&>0  &&\text{ if $ a_{i,j} = 0$ for all $1<j<K-1$,}\\
    a_{i,K-1}&\geq 3-h_i &&\text{ if $g_i = 0$.}
  \end{align*}
\end{enumerate}

\section{Performance}\label{sec:performance}

The complexity of the problem we are trying to solve is intrinsically
higher than polynomial, because already the amount of data to generate
increases (at least) exponentially with the genera and the number of
marked points. We also observed an exponential growth of the ratio
between the time required to solve an instance of the problem and the
number of graphs generated. Anyway, our program is specifically
designed to attack the problem of stable graphs, and it can be
expected to perform better than any general method to generate graphs
applied to our situation.

We present here some of the results obtained when testing our program on
an Intel\textregistered{} Core\texttrademark 2 Quad Processor Q9450 at
2.66\,GHz. The version we tested is not designed for parallel
processing, hence it used only one of the four cores available.

However, when computing a specific graph, the program needs to keep in
the memory only the graphs with the same values in the vectors $g$,
$n$, $l$: memory usage becomes therefore negligible. Moreover this shows that
we can assign the computations of stable graphs with prescribed $g$,
$n$, $l$ to different cores or \textsc{cpu}s, thus having a highly
parallelized implementation of the program.

\begin{figure}[t]
  \centering
  \includegraphics{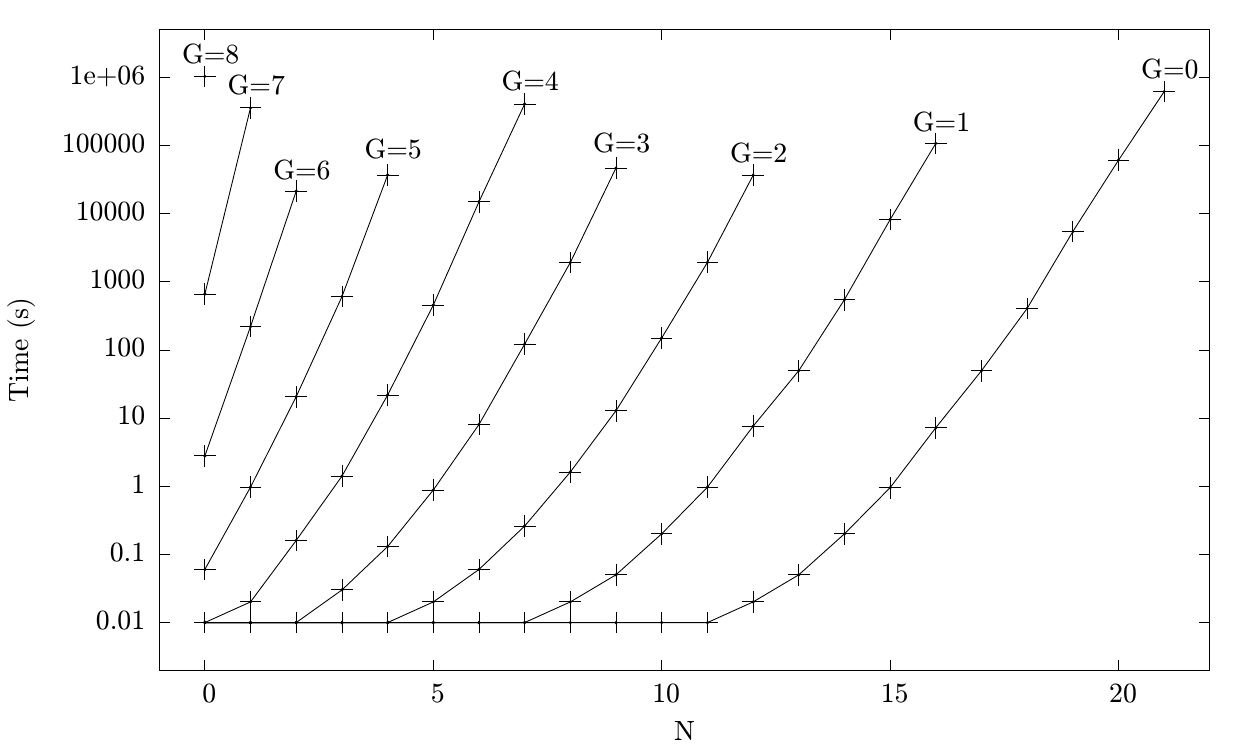}\vspace{-0.5\baselineskip}
  \caption{Time needed to compute all stable graphs of type $(G,
    N)$.}\label{fig:plot}
\end{figure}

\begin{table}[t]
  \centering
  \begin{tabular}{rrrrr}
    $G$ & $N$ & Time (s) & \# stable graphs & Duplicates (\%)\\
    \hline
    0 & 18 & 392 &   847,511 & 54.9\\
    1 & 14 & 539 & 1,832,119 & 41.3\\
    2 & 10 & 147 & 1,282,008 & 30.9\\
    3 &  7 & 117 & 1,280,752 & 29.9\\
    4 &  5 & 459 & 2,543,211 & 40.1\\
    5 &  3 & 606 & 2,575,193 & 54.7\\
    6 &  1 & 226 &   962,172 & 70.6\\
    7 &  0 & 681 & 1,281,678 & 85.6
  \end{tabular}\vspace{\baselineskip}
  \caption{For small $G$, the maximum $N$ such that all stable graphs
    of type $(G,N)$ can be computed in less than $15$ minutes.
   In the last column we show the ratio of duplicated graphs among the total number of those created by our generation algorithm.}
  \label{tab:number}
\end{table}

In Table~\ref{tab:number} we list, for each genus $G$, the maximum
number of marked points $N$ for which we can compute all the stable
graphs of type $(G, N)$ under 15 minutes.

In Figure~\ref{fig:plot} we show all the couples $(G, N)$ that we
computed against the time needed; the lines connect the results
referring to the same genus. From this plot it seems that, for fixed
$G$, the required time increases exponentially with $N$. However, we
believe that in the long run the behaviour will be worse than
exponential. This is suggested also by the fact that the ratio of non-isomorphic stable graphs over those created by our generation algorithm tends to zero as $G$ and $N$
grow (see~Figure~\ref{fig:plot_duplicated}).

\begin{figure}[t]
  \centering
  \includegraphics{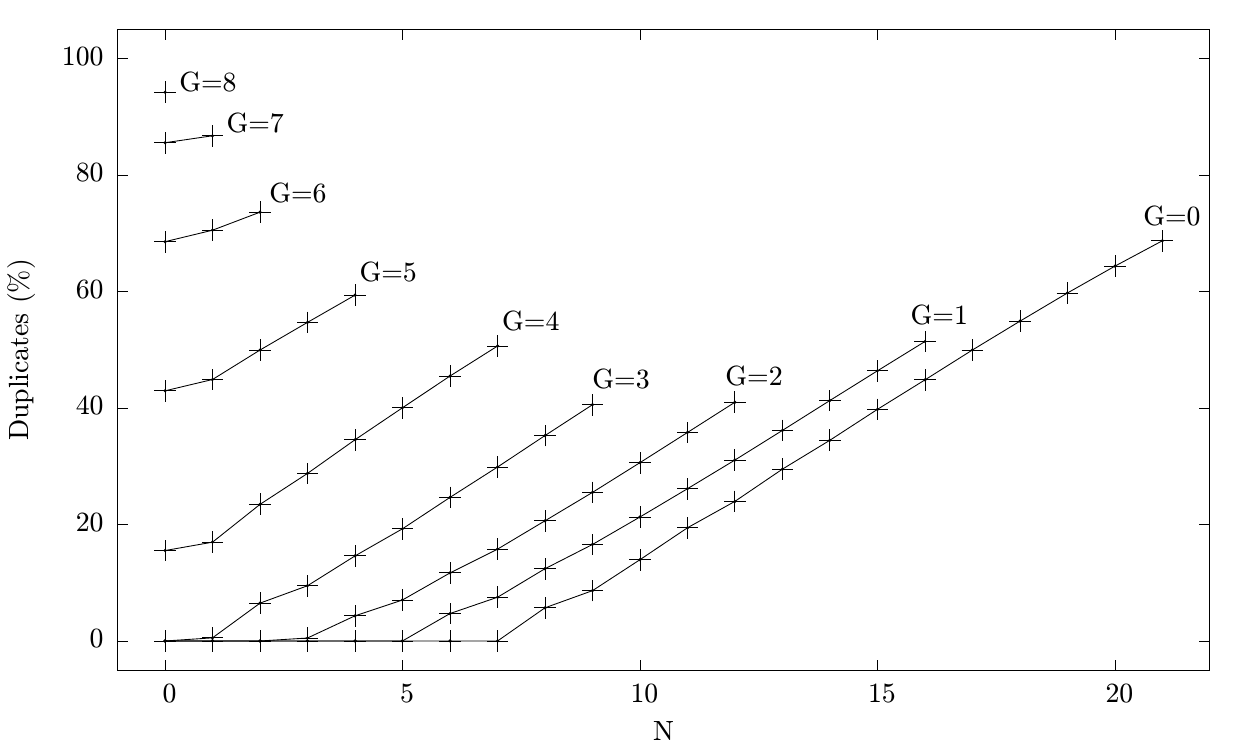}\vspace{-0.7\baselineskip}
  \caption{Ratio of duplicated graphs among the total number of those created by our generation algorithm.}\label{fig:plot_duplicated}
\end{figure}

More benchmarks and up-to-date computed results are available
at \texttt{boundary}'s webpage,
\href{http://people.sissa.it/~maggiolo/boundary/}
{\texttt{http://people.sissa.it/\~{}maggiolo/boundary/}}.

\section*{Acknowledgments}

Both the authors want to acknowledge their host institutions,
\textsc{sissa} and \textsc{kth}. The second author was partly
supported by the Wallenberg foundation. Both authors were partly
supported by \textsc{prin} ``Geometria delle variet\`a algebriche e
dei loro spazi di moduli'', by Istituto Nazionale di Alta
Matematica. The authors are also very grateful to Susha Parameswaran
for linguistic suggestions, and to the referees for suggesting further
improvements of the presentation.

\end{document}